\documentclass[12pt,reqno]{amsart}
\usepackage{amssymb,mathrsfs,amsmath,amsfonts,amsthm,graphicx,pdfpages}
\usepackage{xcolor}
\usepackage{tikz, tikz-cd}
\usepackage[section]{algorithm}
\usepackage{algorithmic}


\newcommand{\define}[1]{{\bf \boldmath{#1}}}

\newcommand{\namedset}[1]{\mathbb{#1}}

\newcommand{\R}{\namedset R}

\newcommand{\namedcat}[1]{\mathsf{#1}}
\newcommand{\Cat}{\namedcat{Cat}}

\newcommand{\Set}{\namedcat{Set}}

\newcommand{\Mor}{\mathrm{Mor}\,}
\newcommand{\Ob}{\mathrm{Ob}\,}
\newcommand{\Vect}{\mathsf{Vect}}
\newcommand{\sdiamond}{\scalebox{0.6}{$\,\diamond\,$}}

\newcommand{\Ch}{\mathrm{Ch}_{\bullet}}

\newcommand{\sVect}{\mathsf{sVect}}
\setlength{\footskip}{30pt}

\makeatletter
\newcommand*{\relrelbarsep}{.386ex}
\newcommand*{\relrelbar}{%
  \mathrel{%
    \mathpalette\@relrelbar\relrelbarsep
  }%
}
\newcommand*{\@relrelbar}[2]{%
  \raise#2\hbox to 0pt{$\m@th#1\relbar$\hss}%
  \lower#2\hbox{$\m@th#1\relbar$}%
}
\providecommand*{\rightrightarrowsfill@}{%
  \arrowfill@\relrelbar\relrelbar\rightrightarrows
}
\providecommand*{\leftleftarrowsfill@}{%
  \arrowfill@\leftleftarrows\relrelbar\relrelbar
}
\providecommand*{\xrightrightarrows}[2][]{%
  \ext@arrow 0359\rightrightarrowsfill@{#1}{#2}%
}
\providecommand*{\xleftleftarrows}[2][]{%
  \ext@arrow 3095\leftleftarrowsfill@{#1}{#2}%
}
\makeatother

\usepackage{subfiles}
\usepackage{stackengine}
\usepackage{mathtools}
\usepackage{color}
\usepackage[utf8]{inputenc}
\usepackage{csquotes}
\usepackage[a4paper,top=3cm,bottom=3cm,inner=3cm,outer=3cm]{geometry}

\usepackage{comment}

\usepackage{graphicx}
\usepackage{adjustbox}
\usepackage[all,2cell]{xy}\UseAllTwocells\SilentMatrices
\definecolor{darkgreen}{rgb}{0,0.45,0}
\usepackage[colorlinks,citecolor=darkgreen,urlcolor=gray,final,hyperindex,linktoc=page,pagebackref]{hyperref}

\usepackage[capitalize]{cleveref}
\crefname{equation}{}{}
\crefname{item}{}{}
\usepackage{enumerate}

\newtheorem*{thm*}{Theorem}
\theoremstyle{remark}
\newtheorem*{rmk*}{Remark}
\newtheorem*{lem*}{Lemma}
\theoremstyle{definition}
\newtheorem*{defn*}{Definition}
\newtheorem*{cor*}{Corollary}
\theoremstyle{definition}
\newtheorem*{examples*}{Examples}
\newtheorem{prop*}{Proposition}

\theoremstyle{plain}
\newtheorem{thm}{Theorem}[section]
\theoremstyle{plain}
\newtheorem{prop}[thm]{Proposition}
\theoremstyle{remark}

\theoremstyle{plain}

\theoremstyle{plain}

\theoremstyle{definition}
\newtheorem{defn}[thm]{Definition}
\theoremstyle{definition}

\newtheorem{hyp}[thm]{Hypothesis}
\theoremstyle{plain}

\newcommand{\maps}{\colon}


\definecolor{purple(x11)}{rgb}{0.8, 0, 0.8}


\usetikzlibrary{arrows,positioning,fit,matrix,shapes.geometric,external}

\newcommand*\pgfdeclareanchoralias[3]{%
  \expandafter\def\csname pgf@anchor@#1@#3\expandafter\endcsname
     \expandafter{\csname pgf@anchor@#1@#2\endcsname}}

\tikzset{
    circnode/.style={
      circle, draw=red, very thin, outer sep=0.025em, minimum size=2em,
      fill=red, text centered},
    integral/.style={
      circle, draw=black, very thick, outer sep=0.025em,
      minimum size=2em, fill=blue!5, text centered},
    multiply/.style={
      circle, draw=black, very thick, outer sep=0.025em,
      minimum size=2em, fill=blue!5, text centered},
    zero/.style={
      circle, draw=black, very thick, minimum size=0.15cm, fill=black,
      inner sep=0, outer sep=0},
    bang/.style={
      circle, draw=black, very thick, minimum size=0.15cm, fill=green!10,
      inner sep=0, outer sep=0},
    delta/.style={
      regular polygon, regular polygon sides=3, minimum size=0.4cm, inner
      sep=0, outer sep=0.025em, draw=black, very thick, fill=green!10},
    codelta/.style={
      regular polygon, regular polygon sides=3, shape border rotate=180, minimum size=0.4cm,
      inner sep=0, outer sep=0.025em, draw=black, very thick, fill=green!10},
    plus/.style={
      regular polygon, regular polygon sides=3, shape border rotate=180, minimum size=0.4cm,
      inner sep = 0, outer sep=0.025em, draw=black, very thick, fill=black},
    coplus/.style={
      regular polygon, regular polygon sides=3, minimum size=0.4cm,
      inner sep = 0, outer sep=0.025em, draw=black, very thick, fill=black},
    sqnode/.style={
      regular polygon, regular polygon sides=4, minimum size=2.6em,
      draw=black, very thick, inner sep=0.2em, outer sep=0.025em,
      fill=yellow!10, text centered},
    bigcirc/.style={
      circle, draw=black, very thick, text width=1.6em, outer sep=0.025em,
      minimum height=1.6em, fill=blue!5, text centered}
}

\tikzstyle{tri}=[regular polygon,regular polygon sides=3,shape border rotate=1
80,fill=none,draw=black]

\pgfdeclareanchoralias{regular polygon}{corner 2}{left copy}
\pgfdeclareanchoralias{regular polygon}{corner 3}{right copy}
\pgfdeclareanchoralias{regular polygon}{corner 3}{addend}
\pgfdeclareanchoralias{regular polygon}{corner 2}{summand}


\usetikzlibrary{positioning}
\definecolor {processblue}{cmyk}{0.9,0.5,0,0}

\tikzstyle{simple}=[-,line width=2.000]
\tikzstyle{arrow}=[-,postaction={decorate},decoration={markings,mark=at position .5 with {\arrow{>}}},line width=1.100]
\pgfdeclarelayer{edgelayer}
\pgfdeclarelayer{nodelayer}
\pgfdeclarelayer{bg}
\pgfsetlayers{bg,edgelayer,nodelayer,main}

\tikzstyle{none}=[inner sep=-1pt]
\tikzstyle{species}=[circle,fill=none,draw=black,scale=0.75]
\tikzstyle{transition}=[rectangle,fill=none,draw=black,scale=1.15]
\tikzstyle{empty}=[circle,fill=none, draw=none]
\tikzstyle{inputdot}=[circle,fill=black,draw=black, scale=.5]
\tikzstyle{dot}=[circle,fill=black,draw=black]
\tikzstyle{bounding}=[circle,dashed, fill=none,draw=black, scale=9.00]
\tikzstyle{triplebounding}=[circle,dashed, fill=none,draw=black, scale=30.00]
\tikzstyle{simple}=[-,draw=black,line width=1.000]
\tikzstyle{inarrow}=[-,draw=black,postaction={decorate},decoration={markings,mark=at position .5 with {\arrow{>}}},line width=1.000]
\tikzstyle{tick}=[-,draw=black,postaction={decorate},decoration={markings,mark=at position .5 with {\draw (0,-0.1) -- (0,0.1);}},line width=1.000]
\tikzstyle{inputarrow}=[->,draw=black, shorten >=.05cm]


\UseTwocells
\usetikzlibrary{backgrounds,circuits,circuits.ee.IEC,shapes,fit,matrix}

\tikzstyle{tri}=[regular polygon,regular polygon sides=3,shape border rotate=1
80,fill=none,draw=black]

\tikzstyle{simple}=[-,line width=2.000]
\tikzstyle{arrow}=[-,postaction={decorate},decoration={markings,mark=at position .5 with {\arrow{>}}},line width=1.100]
\pgfdeclarelayer{edgelayer}
\pgfdeclarelayer{nodelayer}
\pgfsetlayers{edgelayer,nodelayer,main}

\tikzstyle{none}=[inner sep=-1pt]

\definecolor{lblue}{rgb}{0,250,255}

\tikzstyle{species}=[circle,fill=yellow,draw=black,scale=1]
\tikzstyle{transition}=[rectangle,fill=lblue,draw=black,scale=1]
\tikzstyle{morphism}=[rectangle,fill=pink,draw=black,scale=1]


\definecolor{darkgreen}{rgb}{0,0.45,0}
\usepackage[colorlinks,citecolor=darkgreen,urlcolor=gray,final,hyperindex,linktoc=page,pagebackref]{hyperref}

\title{Why is Homology so Powerful?}

\author{Jade Master}

\begin{document}
\maketitle
\begin{abstract}
My short answer to this question is that homology is powerful because it computes invariants of higher categories. In this article we show how this true by taking a leisurely tour of the connection between category theory and homological algebra.
\textbf{Dependencies}: This article assumes familiarity with the basics of category theory and the basics of algebraic topology. 
\end{abstract}
\section{Extending Eckmann-Hilton}
There are many reasons why homology is powerful and this article gives only one perspective. Most of my explanation boils down to the Eckmann-Hilton argument. This is the following theorem:

\begin{thm}
Let $X$ be a set with two binary unital operations
\[ + \maps X \times X \to X \text{ and } \circ \maps X \times X \to X \]
such that $\circ$ is a homomorphism of $+$, i.e. 
\[ (a + b) \circ ( c + d) = a \circ c + b \circ d.\]
Then $+$ and $\circ$ are the same operation and this operation is commutative.
\end{thm}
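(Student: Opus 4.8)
The plan is to exploit the interchange law together with the two unit elements, which I'll call $0$ (the unit for $+$) and $1$ (the unit for $\circ$).

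The key move is the classical Eckmann--Hilton calculation showing $0 = 1$. First I would note that the interchange law $(a+b)\circ(c+d) = (a\circ c)+(b\circ d)$ specializes nicely when we plug in the units. Setting $a = d = 1$ and $b = c = 0$, the left-hand side becomes $(1+0)\circ(0+1)$ and the right-hand side becomes $(1\circ 0)+(0\circ 1)$; using the unit laws for each operation this simplifies to an identity forcing the two units to coincide, so $0 = 1$. Once the units agree, I would write $e := 0 = 1$ and record that $e$ is a two-sided unit for both operations.

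With a shared unit in hand, the rest is the standard two-line argument. For arbitrary $a, b \in X$, I would compute $a \circ b$ two ways by padding with the common unit: writing $a = a + e$ and $b = e + b$ (and symmetrically), the interchange law yields
\[ a \circ b = (a + e)\circ(e + b) = (a \circ e) + (e \circ b) = a + b, \]
which shows the two operations agree. Then, reassociating the insertion the other way, $a + b = (e + a)\circ(b + e) = (e\circ b)+(a\circ e) = b + a$ after simplifying, which gives commutativity; combining these also shows $+$ itself equals $\circ$ and is commutative.

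I do not expect any genuine obstacle here: the entire content is bookkeeping with the two unit laws and a single application of the interchange identity, and the only point requiring care is being scrupulous about \emph{which} unit is being inserted on \emph{which} side at each step, since before establishing $0 = 1$ one cannot freely interchange them. The mild subtlety worth flagging is that one should derive $0 = 1$ \emph{first} and only afterward use the single shared unit $e$ in the padding arguments; attempting to pad with two distinct units simultaneously is where a careless version of the proof would go wrong.
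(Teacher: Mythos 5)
Your proposal is correct and follows essentially the same Eckmann--Hilton argument as the paper: first force the two units to coincide, then pad with the shared unit and apply the interchange law once to show the operations agree and are commutative. The only cosmetic difference is in the first step --- you get $0=1$ from the single substitution $(1+0)\circ(0+1) = (1\circ 0)+(0\circ 1)$, while the paper instead derives $f = f+1$ for all $f$ and invokes uniqueness of units --- but this is the same idea executed slightly differently.
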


The proof of this argument is a lot of fun.

\begin{proof}
Let $1$ denote the unit for $\circ$ and $0$ denote the unit for $+$. First we will show that $1=0$. This follows from the chain of equations
\begin{align*}
f & = f+0 \\
 &= (1 \circ f) + (0 \circ 1) \\
 & = (1 + 0) \circ (f +1) \\
 &= 1 \circ (f+ 1) \\
 & = (f+1). \\
\end{align*}
Therefore $0$ and $1$ are both units for the operation $+$. Because units must be unique, we have that $1=0$. Now we show that the two operations are the same
\begin{align*}
f \circ g &= (f+0) \circ (0 +g) \\
&= (f+1) \circ (1 +g) \\
&= (f \circ 1) + (1 \circ g) \\
&= f +g.
\end{align*}
Lastly we show that this operation is commutative
\begin{align*}
    f \circ g &= (0+f) \circ (g +0) \\
    &= (0 \circ g) + (f \circ 0)\\
    &= (1 \circ g) + (f \circ 1) \\
    &= g + f \\
    &= g \circ f.
\end{align*}

\end{proof}
The relationship between the two operations can be thought of as a clock, where the arrows represent equalities. The following image from Wikimedia commons shows this with the two operations represented by $\oplus$ and $\otimes$ \cite{commons}.
\begin{center}
\includegraphics[scale=0.4]{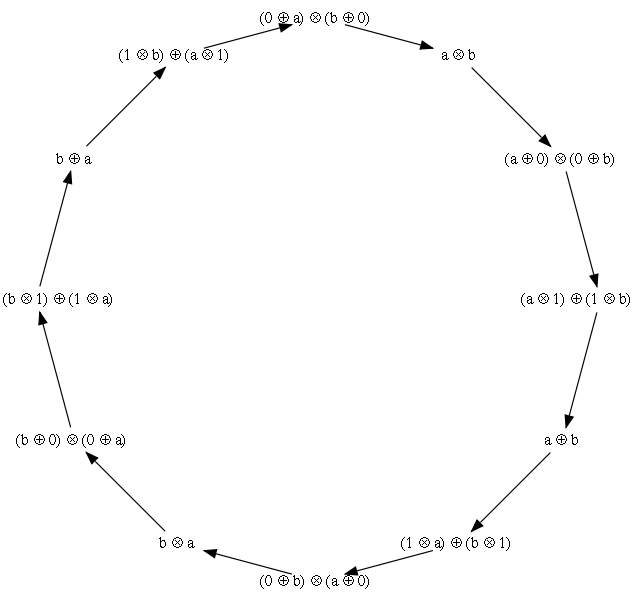}
\end{center}
One way of thinking of a category is as a monoid whose operation is partial. For this reason, the Eckmann-Hilton argument bears on categories equipped with a binary operation. One way to equip categories with operations like this is through internalization.

\begin{defn}
Let $V$ be a category with finite pullbacks. A category $C$ internal to $V$ is a graph in $V$
\[
\begin{tikzcd}
\Mor C \ar[r,shift left=.5ex,"s"] \ar[r,shift right=.5ex,"t",swap] & \Ob C 
\end{tikzcd}
\]
along with an identity assigning morphism
\[i  \maps \Ob C \to \Mor  C \]
and composition morphism
\[\circ \maps \Mor C \times_{\Ob C} \Mor  C \to \Mor   C\]
commuting suitably with the source and target maps. These morphisms are required to satisfy the axioms of unitality and associativity expressed as commutative diagrams.
\end{defn}
 Let $\Vect$ be the category where objects are vector spaces over the real numbers and morphisms are linear transformations. Categories internal to $\Vect$ were first studied by Baez and Crans in \emph{Higher Dimensional Algebra VI: Lie 2-Algebras} \cite{baezcrans}. Here we explicitly describe what these internal categories are like.
 \begin{defn}
 A category $\mathbf{C}$ internal to $\Vect$ is a 
 \begin{itemize}
     \item a vector space of objects $C_0$,
     \item a vector space of morphisms $C_1$,
     \item source and target linear transformations $s,t \maps C_1 \to C_0$,
     \item an identity assigning linear transformation $i \maps C_0 \to C_1$ and,
     \item a composition linear transformation $\circ \maps C_1 \times_{C_0} C_1 \to C_1$
 \end{itemize}
 satisfying the required axioms. A category internal to $\Vect$ is called a \define{2-vector space}.
 \end{defn}
An example of a $2$-vector space is as follows: Let $\mathbf{C}$ be a 2-vector space with $C_1=\mathbb{R}^4$ and $C_0 = \mathbb{R}^2$. Let the source function be given by
\begin{align*}
    s \maps & \mathbb{R}^4 \longrightarrow \mathbb{R}^2 \\
     &(a,b,c,d) \mapsto (a,b)
\end{align*}
let the target function be given by
\begin{align*}
    t \maps & \mathbb{R}^4 \longrightarrow \mathbb{R}^2 \\
     & (a,b,c,d) \mapsto (a+c,b+d),
\end{align*}
and let the identity assigning map be given by
\begin{align*}
    i \maps & \mathbb{R}^2 \longrightarrow \mathbb{R}^4 \\
     & (a,b) \mapsto (a,b,0,0).
\end{align*}
A morphism $f = (a,b,c,d)$ in $\mathbf{C}$ can be drawn in the plane as follows:
\begin{center}
 \begin{tikzpicture}
 \draw[step=.5cm,gray,very thin] (-2,-2) grid (2,2);
 \draw[thick,->] (-2,-1) -- (0,.5);
 \draw(-2,-1) -- (-2,-1) node[anchor=north] {$(a,b)$};
 \draw(0,.5) -- (0,.5) node[anchor=south] {$(a+c,b+d)$};
 \end{tikzpicture}.
 \end{center}
 The first two components of $f$ give the origin of the arrow and the second two components give the arrow itself. The geometry of the situation suggests a natural categorical structure i.e. a composition rule satisfying the axioms of a category. This categorical structure can be defined without any reference to the specific choice of source, target and identity of $\mathbf{C}$. Thus, we now describe this structure for an arbitrary reflexive graph internal to $\Vect$ although it may be helpful to keep $\mathbf{C}$ in the back of your mind.

 For a morphism $f \maps x \to y$ ($s(f)=x$ and $t(f) = y$), we define the \define{arrow part} of $f$ by
 \[\hat{f}= f -i(s(f))= f- i(x). \]
 The idea is that this``translates $f$ to $0$". Now the source of $\hat{f}$ is $0$
 \[s(\hat{f}) = s(f - i(x))= s(f) - s(i(x)) = x-x = 0\]
 because the source and target of $i(x)$ are both given by $x$.
The target of $\hat{f}$ is now given by
 \[ t(f-i(s(f))) = t(f) -t(i(s(f)))= t(f) - s(f) = y-x\]
 Note that every $f$ can be written as
 \[ f = f - i(x) + i(x) = \hat{f} + i(x)\] in a unique way.
 This allows us to think of $f \maps x \to y$ in $C$ is as the vector $\hat{f}$ in the plane pointing from $i(x)$ to $i(y)$. The arrow part of $i(x)$ and $i(y)$ are $0$ because the the source and target maps commute with the identity map. This justifies the lack of an arrow attached to $i(x)$ and $i(y)$ in the above picture. Given a pair of morphisms $f \maps x \to y$ and $g \maps y \to z$,
 \begin{center}
 \begin{tikzpicture}
 \draw[step=.5cm,gray,very thin] (-2,-2) grid (2,2);
 \draw[thick,->] (-2,-1) -- (0,.5) node[midway,above] {$f$};
 \draw[thick,->] (0,.5) -- (2,-.5) node[midway,above] {$g$};
 \draw(-2,-1) -- (-2,-1) node[anchor=north] {$i(x)$};
 \draw(0,.5) -- (0,.5) node[anchor=south] {$i(y)$};
 \draw(2,-.5) -- (2,-.5) node[anchor=north] {$i(z)$};
 \end{tikzpicture}
 \end{center}
 we can compose to get a morphism $g \sdiamond f \maps x \to z$
  \begin{center}
 \begin{tikzpicture}
 \draw[step=.5cm,gray,very thin] (-2,-2) grid (2,2);
 \draw[thick,->] (-2,-1) -- (0,.5) node[midway,above] {$f$};
 \draw[thick,->] (0,.5) -- (2,-.5) node[midway,above] {$g$};
 \draw[thick,->] (-2,-1) -- (2,-.5) node [midway,below] {$g \sdiamond f$};
 \draw(-2,-1) -- (-2,-1) node[anchor=north] {$i(x)$};
 \draw(0,.5) -- (0,.5) node[anchor=south] {$i(y)$};
 \draw(2,-.5) -- (2,-.5) node[anchor=north] {$i(z)$};
 \end{tikzpicture}
 \end{center}
 Formally, the composite $g \sdiamond f$ is given by 
 \[g \sdiamond f = \hat{f} + \hat{g} +i(s(f)). \]
 The arrow part of $g \sdiamond f$ is now given by the sum of the arrow parts of $f$ and of $g$. The source of $g \sdiamond f$ is
 \begin{align*} 
 s(g \sdiamond f) &= s(\hat{f} +\hat{g} + i(x))\\
 &= 0 + 0 + s(i(x))\\
 &= x
 \end{align*}
 and the target of $g \sdiamond f$ is given by
 \begin{align*}
 t(g \circ f) &= t(\hat{f} +\hat{g} +t(i(s(f)) \\
 &= t(\hat{f})+ t(\hat{g}) + s(f) \\
 &= y - x + z - y + x \\
 &=z.
 \end{align*} 
 The last step of this computation requires commutativity of vector sum. If the sum was not commutative, then the above composition would not form the structure of a category on the underlying reflexive graph of $\mathbf{C}$. $i$ does assign elements to their identity morphism under $\sdiamond$. Composing on the right gives
\begin{align*}
 f \sdiamond i(x) & = 0 + \hat{f} +i(s(i(x)))\\
 &= (f - i(s(f))) + i(x) \\
 &= f.
 \end{align*}
Similarly, composing on the left results in
\begin{align*}
    i(y) \sdiamond f &= \hat{i(y)} + \hat{f} + i(x) \\
    &= i(y) - i(s(i(y)) + \hat{f} + i(x)\\
    &= i(y) - i (y)  + \hat{f} + i(x) \\
    &= \hat{f} + i(x) \\
    &= f - i(x) + i(x) \\
    &= f.
\end{align*}
Therefore the composition rule $\sdiamond$ defines a composition rule on $\mathbf{C}$. Because we made no reference to a specific choice of source, target and identity map, composition given by $\sdiamond$ defines the structure of a category on \emph{any} reflexive graph internal to $\Vect$. What's really surprising is that this is the only way to define composition in a $2$-vector space.
 \begin{prop}\label{comp}
 Every $2$-vector space has composition defined as above.
 \end{prop}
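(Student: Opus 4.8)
The plan is to use only linearity of the composition map $\circ$ together with the two unit laws; associativity will not be needed. The key observation is that a composable pair, viewed as an element of the vector space $C_1 \times_{C_0} C_1$, splits as a sum of three especially simple composable pairs, and that $\circ$ is pinned down on each of them by unitality alone.

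First I would fix a composable pair $(g,f)$, say with $s(f)=x$ and $t(f)=s(g)=y$, and use the arrow-part decomposition established above to write $f=\hat f + i(x)$ and $g=\hat g + i(y)$, where $\hat f,\hat g$ lie in $\ker s$. The reflexive-graph identities $s(i(x))=t(i(x))=x$ give $y=t(f)=t(\hat f)+x$, hence $i(y)=i(t(\hat f))+i(x)$.

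The heart of the argument is then the linear decomposition
\[ (g,f)=\big(i(x),\,i(x)\big)+\big(i(t(\hat f)),\,\hat f\big)+\big(\hat g,\,0\big), \]
each summand of which is genuinely composable, the source--target matches being $x=x$, $t(\hat f)=t(\hat f)$, and $0=0$ (the last because $s(\hat g)=0$ and the zero morphism equals $i(0)$, since $i$ is linear). Applying linearity of $\circ$ expresses $g\circ f$ as a sum of three composites, and each is handled by a unit law: $i(x)\circ i(x)=i(x)$, the left unit gives $i(t(\hat f))\circ\hat f=\hat f$, and the right unit gives $\hat g\circ 0=\hat g\circ i(0)=\hat g$. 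Summing yields $g\circ f=\hat f+\hat g+i(x)$, which is exactly $g\sdiamond f$.

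The only real obstacle is spotting the three-term decomposition; once it is written down the proof is forced by the reflexive-graph axioms $si=ti=\mathrm{id}$, the fact that $i(0)=0$, and the two unit laws. I would also take a moment to confirm that $\ker s$ is precisely the space of arrow parts, so that the three summands are legitimate composable pairs and the linearity step is justified.
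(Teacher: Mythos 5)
Your proof is correct, but it takes a genuinely different route from the paper's. The paper runs an Eckmann--Hilton-style argument: using the interchange law (linearity of $\circ$) twice together with commutativity of $+$, it shows
\[ g \circ f + 1_y = (g+1_y)\circ(f+1_y) = f+g = g \sdiamond f + 1_y, \]
and then concludes by specializing to $y=0$ (where one could instead simply cancel $1_y$, since everything lives in a vector space). Your argument never forms these padded composites; instead you split the composable pair itself as
\[ (g,f)=\bigl(i(x),\,i(x)\bigr)+\bigl(i(t(\hat f)),\,\hat f\bigr)+\bigl(\hat g,\,0\bigr), \]
check that each summand lies in the pullback $C_1 \times_{C_0} C_1$, and let a single application of linearity plus the unit laws finish the job. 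What the paper's route buys is thematic: it exhibits the proposition as an instance of the Eckmann--Hilton clock, which is the whole point of the article. What your route buys is directness and completeness: it proves $g \circ f = g \sdiamond f$ for every composable pair at once, with no restriction to $y=0$ and no cancellation step, and it isolates exactly which hypotheses are used --- that the pullback is a linear subspace of $C_1 \times C_1$ on which $\circ$ is linear, that $i(0)=0$, and the two unit laws (associativity indeed never enters). Your closing remark about confirming that arrow parts lie in $\ker s$ is exactly the right check to legitimize the three summands, and it is already established in the paper's discussion preceding the proposition.
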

 \begin{proof}
Let $C$ be a 2-vector space
\[\begin{tikzcd} C_1 \ar[r,shift left=1.1ex,"s"] \ar[r,shift right=1.1ex,"t",swap] & \ar[l,"i"description] C_0 \end{tikzcd}\]
with composition map
\[ \circ \maps C1 \times_{C0} C1 \to C1.\]As shown above, the underlying reflexive graph of $C$ can be turned into a category via the rule
\[ g \sdiamond f = \hat{f} + \hat{g} + i (s(f)) \]
where $\,\hat{}\,$ denotes the \emph{arrow part} of a morphism. Because $\circ$ is a linear transformation it satisfies the law
 \[ (g \circ f) + 
 (g' \circ f') = (g  + g') \circ 
 (f + f')\]
 where $f \maps x \to y$, $g \maps y \to z$, $f' \maps x' \to y'$ and $g' \maps y' \to z'$. This equation is called the \define{interchange law}. The interchange law allows us to apply a version of the Eckmann-Hilton argument to the operations $\sdiamond$ and $\circ$. Indeed for $f \maps x \to y$ and $g \maps y \to z$ we have that
 \begin{align*}
     g \circ f + 1_y &= g \circ f + 1_y \circ 1_y \\
     &= (g+1_y) \circ (f +1_y)\\
\end{align*}
     by the interchange law. Using commutativity and the interchange law again we get that 
\begin{align*}
     (g+1_y) \circ (f +1_y) &= (1_y + g) \circ (f + 1_y) \\
     &= f \circ 1_y + 1_y \circ g \\
     &= f+g 
\end{align*}
However, we can decompose this using the arrow parts of $f$ and $g$ to get the $\sdiamond$ composition:
\begin{align*}
     f+g &= 1_x + \hat{f} + 1_y + \hat{g} \\
     &= (1_x + \hat{f} + \hat{g}) + 1_y \\
     &= g \sdiamond f + 1_y
\end{align*}
Setting $y=0$ gives that $1_y=0$ as well because $i$ is a linear transformation. Therefore, when $y=0$, the above sequence of equations gives that $g \circ f = g \sdiamond f$.
\end{proof}
\noindent Proposition \ref{comp} extends to the following equivalence of categories:
 \begin{prop}\label{vectgraph}
 There is an equivalence of categories 
 \[\namedcat{2}\text{-}\mathsf{Vect} \cong \mathrm{RGraph}(\Vect) \]
 where $\mathrm{RGraph}(\Vect)$ is the category of reflexive graphs internal to $\Vect$.
 \end{prop}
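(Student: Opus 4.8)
The plan is to realize the equivalence through the evident forgetful functor
\[ U \maps \namedcat{2}\text{-}\mathsf{Vect} \to \mathrm{RGraph}(\Vect), \]
which sends a $2$-vector space to its underlying reflexive graph $(C_1, C_0, s, t, i)$, discarding the composition $\circ$, and sends an internal functor $(F_0, F_1)$ to the same pair of linear maps regarded as a morphism of reflexive graphs. This is well defined and functorial on the nose, since an internal functor is in particular a graph morphism and composition of internal functors is computed componentwise. I would then prove that $U$ is a bijection on objects and is fully faithful; this yields the stated equivalence, and in fact upgrades it to an isomorphism of categories.

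On objects, Proposition \ref{comp} and the construction preceding it do all the work. The computation establishing that $\sdiamond$ is unital and associative was carried out for an arbitrary reflexive graph internal to $\Vect$, so every such graph is the underlying graph of a $2$-vector space, and $U$ is surjective on objects. Proposition \ref{comp} supplies the converse: in any $2$-vector space the composition is forced to equal $\sdiamond$, so two $2$-vector spaces with the same underlying reflexive graph coincide, making $U$ injective on objects as well.

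The heart of the argument is fullness. Faithfulness is immediate, because an internal functor carries exactly the data of its underlying graph morphism, so distinct internal functors have distinct underlying morphisms. For fullness I must show that any morphism of reflexive graphs $(F_0, F_1)$ already respects $\sdiamond$, and hence defines an internal functor. The key lemma is that a linear graph morphism preserves arrow parts: from $F_1 \circ i = i \circ F_0$ and $s \circ F_1 = F_0 \circ s$ together with linearity of $F_1$,
\[ F_1(\hat{f}) = F_1\bigl(f - i(s(f))\bigr) = F_1(f) - i\bigl(s(F_1(f))\bigr) = \widehat{F_1(f)}. \]
Feeding this into the formula $g \sdiamond f = \hat{f} + \hat{g} + i(s(f))$ gives
\[ F_1(g \sdiamond f) = F_1(\hat{f}) + F_1(\hat{g}) + i\bigl(s(F_1(f))\bigr) = \widehat{F_1(f)} + \widehat{F_1(g)} + i\bigl(s(F_1(f))\bigr) = F_1(g) \sdiamond F_1(f), \]
so $(F_0, F_1)$ is an internal functor, proving fullness.

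I expect the fullness step to be the only place requiring genuine thought: it is precisely the observation that linearity forces graph morphisms to preserve the translation-to-zero operation, which is what lets the composition be transported automatically. Everything else is bookkeeping resting on Proposition \ref{comp}. With bijectivity on objects and full faithfulness in hand, $U$ is an equivalence (indeed an isomorphism) of categories, as claimed.
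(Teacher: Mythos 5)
Your proof is correct, and its skeleton matches the paper's: the equivalence is realized by the forgetful functor, with inverse given by equipping a reflexive graph with the composition $\sdiamond$. The difference is in what actually gets proved. The paper's proof is a two-line sketch that names the inverse pair and defers all verification to Crans' thesis; in particular, its claim that a morphism of reflexive graphs is sent to ``the unique functor which respects this composition rule'' is precisely your fullness statement, asserted rather than proved. Your arrow-part lemma $F_1(\hat{f}) = \widehat{F_1(f)}$, which follows from linearity of $F_1$ together with $F_1 \circ i = i \circ F_0$ and $s \circ F_1 = F_0 \circ s$, supplies the missing justification and is the one substantive step: it shows that every graph morphism automatically preserves $\sdiamond$, hence the forgetful functor is full, while bijectivity on objects follows from Proposition \ref{comp}. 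One gloss you inherit from the paper: surjectivity on objects needs $\sdiamond$ to be associative and to be linear as a map out of the pullback $C_1 \times_{C_0} C_1$, and neither you nor the paper checks this explicitly, though both facts are immediate (arrow parts add under $\sdiamond$, and $g \sdiamond f = f + g - i(t(f))$ is visibly linear). As a bonus, your argument yields a slightly stronger conclusion than stated: an isomorphism of categories, not merely an equivalence.
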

 
 \begin{proof}
 the left inverse sends reflexive graphs to the category with composition rule given by $\sdiamond$ and sends morphisms of reflexive graphs to the unique functor which respects this composition rule. The right inverse sends $\Vect$-categories to their underlying reflexive graph and $\Vect$-functors to their underlying morphisms of reflexive graphs. A detailed proof of this proposition can be found in Crans' thesis \cite{crans}.
 \end{proof}

 \section{From Categories to Chain Complexes}
 When I first learned about chain-complexes I didn't understand what sort of thing they were trying to describe. What I was looking for was some down-to-earth explanation of their motivation. I got a clue about this when I learned the definition of a homotopy between chain maps.
\begin{defn}
Given chain maps $f,g \maps C_\cdot \to D_\cdot$,
a \define{homotopy} $\alpha_\cdot \maps f \Rightarrow g$ is a family of functions $\alpha_n$ of the following form:
\[ \begin{tikzcd}
\ldots \ar[r] &C_{n+1} \ar[d,"f_{n+1} -g_{n+1}",swap] \ar[r,"\delta_{n+1}"] & C_n \ar[d,"f_n - g_n"description,swap] \ar[dl,"\alpha_n",swap] \ar[r,"\delta_n"] & C_{n-1} \ar[dl,"\alpha_{n-1}"] \ar[d,"f_{n-1} - g_{n-1}"] \ar[r] & \ldots \\
\ldots \ar[r] & D_{n+1} \ar[r,"\delta_{n+1}",swap]& D_n \ar[r,"\delta_n",swap] & D_{n-1} \ar[r] & \ldots 
\end{tikzcd}\]
However, the above triangles do not commute. Instead they satisfy the equations
\[f_n - g_n = \delta_{n+1} \circ \alpha_n + \alpha_{n-1} \circ \delta_n.\]
\end{defn}
I noticed that this definition reminded me of the definition of a natural transformation.
\begin{defn}
Given functors $F,G \maps C \to D$ a \define{natural transformation} $\alpha \maps F \Rightarrow G$, is a function of the form:
\[ \begin{tikzcd}
\Mor C \ar[d,shift left=.5ex,"F_1"] \ar[d,shift right=.5ex,"G_1",swap] \ar[r,shift left=.5ex,"s"] \ar[r,shift right=.5ex,"t",swap] & \Ob C \ar[dl,"\alpha"] \ar[d,shift left=.5ex,"F_0"] \ar[d,shift right=.5ex,"G_o",swap] \\
\Mor D \ar[r,shift left=.5ex,"s"] \ar[r,shift right=.5ex,"t",swap] & \Ob D
\end{tikzcd}\]
Not all the triangles here commute, but we do have the equations
\begin{align*}
 s \circ \alpha & = F_0 \\
 t \circ \alpha & = G_0 
\end{align*}
expressing that $\alpha$ offers comparison morphisms between the images of $F$ and $G$. $\alpha$ must also satisfy a naturality condition expressing compatibility with composition in $C$ and $D$.
\end{defn}

 Both of these definitions consist of maps going diagonally and up a dimension and it turns out that the equations they must satisfy are related as well. This relationship is part of a larger story which relates higher categories to chain complexes in order to reason about them more effectively. To see how this works, we need to understand three things:
\begin{enumerate}
\item How categories can be turned into simplicial sets,
\item How simplical sets can be turned into simplicial vector spaces and,
\item How simplicial vector spaces can be turned into chain complexes.
\end{enumerate}
Once we understand these three things, we will have a 2-functor 
\[ \Ch \maps \Cat \to \Ch(\Vect)\]
which connects the disparate worlds of category theory and homological algebra. These three things will be addressed by the proceeding three subsections.
\subsection{The Nerve Construction}
Categories can be thought of as simplicial sets which only have interesting information in dimensions $0$,$1$, and $2$. The nerve construction makes this precise by providing a full and faithful embedding from $\Cat$ to $\namedcat{sSet}$. 
\begin{defn}\label{nerve} For a category $C$, its nerve is a simplicial set 
\[ N(C) \maps \Delta^{op} \to \Set \]
with 
\[N(C)[n]= \Cat( [n] , C)\]
i.e. the set of functors from the poset $[n]$ to the category $C$ \cite{nLab}. In other words, this is the set of composable n-chains of morphisms in $C$. The boundary map $d_i \maps N
(C)[n+1] \to N(C)[n]$ comes in two cases: 
\begin{itemize}
    \item If $i=0$ or $n$ then it sends an $n$-chain to an $n-1$ chain which forgets the first and the last morphism in the chain respectively.
    \item Otherwise, $d_i$ acts by composing $i$-th morphism with the $i+1$-th morphism to get an $n$-chain.
\end{itemize}
The degeneracy maps 
\[ s_i \maps N(c)[n] \to N(c)[n
+1] \]
turn $n$-chains into $n+1$-chains by inserting and identity in the $i$-th spot.
\end{defn}
At first this definition seems too intuitive to be the right thing. John Baez said this about the nerve:
\begin{displayquote}
    When I first heard of this idea I cracked up. It seemed like an insane sort of joke. Turning a category into a kind of geometrical object built of simplices? What nerve! What use could this possibly be? \cite{twf}
\end{displayquote}
Category theory is a field of math where you not only guess answers to questions but also the questions and definitions. With enough experience, certain definitions in category theory will feel inevitable, as if they are the only natural way to define things. The following definition feels that way:

\begin{defn}
For a functor between categories $F \maps C \to D$, there is a natural transformation 
\[ N(F) \maps N(C) \Rightarrow N(D) \]
defined on $0$-cells by the object component of $F$. For higher dimensional simplices, the map 
\[ N(F)[n] \maps  N(C)[n] \to N(D)[n]\]
sends a commuting $n$-chain
\[
\begin{tikzcd}
x_0 \ar[r,"f_0"] & x_1 \ar[r,"f_1"] & \ldots \ar[r,"f_{n-1}"] & x_{n-1} \ar[r,"f_n"] & x_n
\end{tikzcd}
\]
to its image under $F$
\[\begin{tikzcd}
F(x_0) \ar[r,"F(f_0)"] & F(x_1) \ar[r,"F(f_1)"] & \ldots \ar[r,"F(f_{n-1})"] & F(x_{n-1}) \ar[r,"F(f_n)"] & F(x_n)
\end{tikzcd}. \]
\end{defn}
 \noindent $N(F)$ is well defined because every functor sends commuting diagrams to commuting diagrams. 
 
 It's a surprising and incredible fact that only the $0$, $1$ and $2$-chains contain all the necessary data of your category. Let $\Delta^{op}_{\leq n}$ be the full subcategory of $\Delta^{op}$ which only contains the objects $[0],[1],\ldots, [n]$. Then the inclusion
\[ \Delta^{op}_{\leq n} \hookrightarrow \Delta^{op}\]
induces a truncation functor by precomposition
\[\mathsf{trunc}_{\leq n} \maps \Set^{\Delta^{op}} \to \Set^{\Delta_{\leq n}^{op}}.\]
This functor sends a simplicial set $X \maps \Delta^{op} \to \Set$ to it's trunctation $X_{\leq n} \maps \Delta_{\leq n}^{op} \to \Set$
which forgets about the the $k$-simplices for $k>n$. It turns out that truncation has both a left and right adjoint. The property of adjointness can be used to give a slick definition:
\begin{defn}
Let 
\[\mathsf{Sk}_n \maps \Set^{\Delta_{\leq n}^{op}} \to  \Set^{\Delta^{op}} \]
be the left adjoint to $n$-truncation and let 
\[\mathsf{Cosk}_n \maps \Set^{\Delta_{\leq n}^{op}} \to  \Set^{\Delta^{op}}\]
be the right adjoint to $n$-truncation. A simplicial set is called \define{$n$-skeletal} if it is isomorphic to something in the image of $\mathsf{Sk}_n$ and called \define{$n$-coskeletal} if it is isomorphic to something in the image of $\mathsf{Cosk}_n$.
\end{defn}
The skeleton freely adds degenerate simplices i.e. the only simplices for $k>n$ are the degenerate simplices of lower dimension. The coskeleton freely fills in higher dimensional simplices when possible. This means that whenever a set of simplices with dimension less than $n$ outline a simplex of dimension greater than $n$ then that higher dimensional simplex is freely included in the coskeleton. In either case, the $k$-simplices for $k>n$ are trivial in the sense that they are completely determined by simplices in dimension less than $n$. Nerves of categories form simplicial sets which are $2$-coskeletal.
\begin{thm}\label{embed}
 The nerve construction 
 \[ N \maps \Cat \hookrightarrow \namedcat{sSet}\]
 is a full and faithful functor whose image contains only $2$-coskeletal simplical sets.
\end{thm}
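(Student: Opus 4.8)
The plan is to check three independent properties --- faithfulness, fullness, and $2$-coskeletality --- using only the explicit low-dimensional description of the nerve from Definition \ref{nerve}, together with one structural observation that ties the three together: an $n$-chain of morphisms is completely determined by its \emph{spine}, the sequence of consecutive edges $x_0 \to x_1,\ x_1 \to x_2,\ \ldots,\ x_{n-1} \to x_n$. Faithfulness is then immediate: if $N(F) = N(G)$ for functors $F,G \maps C \to D$, comparing degree-$0$ components recovers $F = G$ on objects and comparing degree-$1$ components recovers $F = G$ on morphisms, so $F = G$.

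For fullness I would start with an arbitrary simplicial map $\phi \maps N(C) \to N(D)$ and define a candidate functor $F$ by letting it act on objects as $\phi[0] \maps \Ob C \to \Ob D$ and on morphisms as $\phi[1] \maps \Mor C \to \Mor D$. The simplicial identities then force the functor axioms: commutativity of $\phi$ with the source and target maps $N(\blank)[1] \to N(\blank)[0]$ shows $F$ respects source and target; commutativity with the degeneracy $s_0$, which inserts an identity, shows $F$ preserves identities; and commutativity with the face maps on $2$-simplices shows $F$ preserves composition. The key point here is that an element of $N(D)[2]$ is a composable pair determined by its two outer faces (the ones forgetting the first and last morphism), so $\phi[2]$ applied to a composable pair $(f,g)$ must equal $(\phi[1] f, \phi[1] g)$; applying the inner face, which composes, then yields $\phi[1](g \circ f) = \phi[1] g \circ \phi[1] f$. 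Finally, since every higher simplex is rebuilt from its spine and $\phi$ commutes with the face maps extracting that spine, $\phi$ agrees with $N(F)$ in every degree, so $\phi = N(F)$.

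For the coskeletal claim I would use that $N(C)$ is $2$-coskeletal exactly when the unit $N(C) \to \mathsf{Cosk}_2\, \mathsf{trunc}_{\leq 2} N(C)$ is an isomorphism, which concretely means every map $\partial \Delta^m \to N(C)$ with $m \geq 3$ admits a unique filler $\Delta^m \to N(C)$. So fix $m \geq 3$ and such boundary data: objects $x_i$, edges $x_i \to x_j$, and the requirement that each $2$-face be a commuting triangle. A filler is an $m$-chain, and by the spine observation it is forced to be the chain assembled from the consecutive edges $x_i \to x_{i+1}$, which gives uniqueness. For existence one must verify that every triangle $x_i \to x_j \to x_k$ commutes, and this is where $m \geq 3$ enters: then $m+1 \geq 4$ vertices are available, so every triple $\{i,j,k\}$ omits some vertex and the triangle lies inside a codimension-one facet, on which the given boundary data is already a genuine commuting chain. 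Hence the filler exists and is unique.

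The main obstacle is concentrated in two places. In the fullness step it is the routine but careful translation of each functor axiom into its matching simplicial identity; in the coskeletal step it is the combinatorial verification that for $m \geq 3$ all triangles are \emph{forced} to commute by facet data. I expect the latter to be the genuinely substantive point, since it is precisely why the threshold is $2$ and not lower: at $m = 2$ the filler is not unique, because the composite is extra data rather than determined data, so the argument must exploit that every $2$-face of an $m$-simplex with $m \geq 3$ is visible inside some codimension-one face.
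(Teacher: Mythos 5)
Your proposal is correct in all three of its steps, and it is considerably more of a proof than what the paper itself offers: after stating Theorem \ref{embed}, the paper never opens a proof environment, contenting itself with an intuitive unpacking --- $0$-simplices are objects, $1$-simplices are morphisms, $2$-simplices are commuting triangles encoding relations, and a $3$-simplex is ``redundant'' because its data is already recorded by its triangular faces. Your spine observation is exactly the rigorous form of that intuition, and you deploy it where the paper does not: faithfulness by reading off degrees $0$ and $1$; fullness by assembling a functor from $\phi[0]$ and $\phi[1]$ and extracting each functor axiom from a matching simplicial identity (in particular composition from the inner face of a $2$-simplex, which is legitimate because a $2$-simplex of a nerve is determined by its two outer faces); and $2$-coskeletality via unique fillers of $\partial\Delta^m \to N(C)$ for $m \geq 3$, with uniqueness from the spine and existence from the fact that for $m \geq 3$ every triangle of the boundary sphere carries an honest $2$-simplex of $N(C)$, hence commutes. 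What your route buys is an actual verification of fullness and of the coskeletal claim, neither of which the paper's discussion establishes; what the paper's discussion buys is the conceptual link to generators and relations (via the free-category monad) explaining why dimension $2$ is the natural cutoff.

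One slip is worth correcting, though it does not affect the theorem: your closing diagnosis of what fails at $m = 2$ is backwards. Boundary data $\partial\Delta^2 \to N(C)$ consists of edges $f \maps x_0 \to x_1$, $g \maps x_1 \to x_2$, $h \maps x_0 \to x_2$; a filler is a composable pair whose inner face is $h$, so it exists if and only if $h = g \circ f$, and when it exists it is unique --- by the very fact about outer faces that you used in the fullness step. So the nerve fails to be $1$-coskeletal because fillers of $2$-sphere data need not \emph{exist} (the long edge is constrained, not free), not because they fail to be unique. The threshold is $2$ because only for $m \geq 3$ does $\partial\Delta^m$ already contain every triangle as an actual $2$-face, which forces all the commutativities that existence of the filler requires.
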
\noindent To understand this theorem, it will be useful to unpack the definition of the nerve. For a category $C$, the $0$-simplices of $N(C)$ are given by
\[ N(C)[0] = \Ob C\]
and the $1$-simplices of $N(C)$ are given by
\[N(C)[1] = \Mor C.\]
The 2-cells are more interesting. They can be thought of as commuting triangles 
\[
\begin{tikzcd}
& x_1 \ar[ddr,"g"] &  \\
& & \\
x_0 \ar[uur,"f"] \ar[rr,"h",swap] & & x_2
\end{tikzcd}
\]
These encode the relations between morphisms. Just like how groups and other algebraic gadgets can be described using generators and relations, categories can be described with two things
\begin{itemize}
    \item its data i.e. objects and morphisms and,
    \item its relations, i.e. equations between morphisms and their compositions.
\end{itemize}
This statement is justified by the fact that $\Cat$ is the category of algebras for the ``free-category on a directed graph" monad. In particular, this means that every category can be described as a graph homomorphism
\[ A \maps L(X) \to X \]
where $L(X)$ is the underlying graph of the free category on some graph $X$. Here, the map $A$ picks out relations between arbitrary compositions of morphisms in a category whose underlying graph is given by $X$. So, because the $0$,$1$, and $2$-simplices of $N(C)$ contain the objects, morphisms, and relations of $C$, it makes at least intuitive sense that these simplices capture all the essential information of $C$. A $3$-simplex of $N(C)$ is a commuting square
\[
\begin{tikzcd}
x_0 \ar[r,"f"] \ar[d,"k",swap] & x_1 \ar[d,"g"]\\
x_3  & \ar[l,"h"] x_2
\end{tikzcd}
\]
However, instead we could write this as two commuting triangles
\[
\begin{tikzcd}
& x_1 \ar[ddr,"h"] &  \\
& & \\
x_0 \ar[uur,"g \circ f"] \ar[rr,"k",swap] & & x_2
\end{tikzcd}
\quad
\begin{tikzcd}
& x_1 \ar[ddr,"h \circ g"] &  \\
& & \\
x_0 \ar[uur,"f"] \ar[rr,"k",swap] & & x_2.
\end{tikzcd}
\]
Actually, these two triangles are the two inner boundaries of the above $3$-simplex according to the nerve construction. In this way, every $3$-simplex is redundant because the data it represents is already contained as $2$-simplices. Note that this phenomenon already occurs with groups. It is the fact that every product of three elements $x_1x_2x_3$ can be turned into two different products of two elements, $(x_1x_2)x_3$ and $x_1(x_2x_3)$, by adding parentheses. This is why groups only have a binary operation rather than an $n$-ary operation for every natural number $n$.

\subsection{Simplicial Vector Spaces}
Simplicial vector spaces are just like simplicial sets, except that the $n$-simplices form a vector space rather than a set.
\begin{defn}
 A \define{simplicial vector space} is a functor
 \[\Delta^{op} \to \Vect \]
\end{defn}\noindent To turn a simplicial set $\Delta^{op} \to \Set $ into a simplicial vector space,
we will compose it with a sensible functor $\mathbf{F} \maps \Set \to \Vect$. The functor $\mathbf{F}$ is as follows:
\begin{prop}
 Let 
 \[ \mathbf{U} \maps \Vect \to \Set\]
 be the forgetful functor which sends every vector space to its underlying set. Then $U$ has a left adjoint 
 \[\mathbf{F}\maps \Set \to \Vect\]called the \define{free vector space functor}. A set $X$ is sent to the set $\mathbb{R}^X$ of functions from $X$ to $\mathbb{R}$ which are nonzero on only a finite subset of 
 $X$. For a function $f \maps X \to Y$, 
 \[ \mathbf{F}(f) \maps \R^X \to \R^Y\]
 is the unique linear transformation which extends $f$.
\end{prop}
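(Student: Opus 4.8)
The plan is to exhibit $\mathbf{F}$ as a left adjoint to $\mathbf{U}$ by constructing the unit of the adjunction and verifying the universal property that characterizes free objects. First I would record the vector space structure on $\mathbf{F}(X) = \R^X$: the finitely supported functions $X \to \R$ form a real vector space under pointwise addition and scalar multiplication, and the \define{delta functions} $\delta_x$, defined by $\delta_x(y) = 1$ if $y=x$ and $0$ otherwise, form a basis. Indeed, any finitely supported $\phi \maps X \to \R$ can be written uniquely as the finite sum $\phi = \sum_{x} \phi(x)\,\delta_x$, where finiteness is exactly what the support condition buys us.

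Next I would define the unit $\eta_X \maps X \to \mathbf{U}\mathbf{F}(X)$ by $\eta_X(x) = \delta_x$ and establish the universal property: for every vector space $V$ and every function $g \maps X \to \mathbf{U}(V)$, there is a unique linear map $\bar{g} \maps \mathbf{F}(X) \to V$ with $\mathbf{U}(\bar{g}) \circ \eta_X = g$. Existence comes from setting
\[ \bar{g}\Big( \sum_x c_x \delta_x \Big) = \sum_x c_x\, g(x), \]
a finite sum and hence well defined; this map is manifestly linear and satisfies $\bar{g}(\delta_x) = g(x)$. Uniqueness follows because any linear map out of $\mathbf{F}(X)$ is determined by its values on the basis $\{\delta_x\}$, and those values are pinned down by $g$ through $\eta_X$.

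From the universal property I would read off the natural bijection
\[ \Vect(\mathbf{F}(X), V) \xrightarrow{\ \sim\ } \Set(X, \mathbf{U}(V)), \qquad h \mapsto \mathbf{U}(h) \circ \eta_X, \]
with inverse $g \mapsto \bar{g}$. The remaining work is to check naturality of this bijection in both arguments. Naturality in $V$ amounts to postcomposition commuting with the assignment, which is immediate from the definitions. Naturality in $X$ forces $\mathbf{F}$ to act on a function $f \maps X \to Y$ as the unique linear map sending $\delta_x \mapsto \delta_{f(x)}$ --- precisely ``the unique linear transformation which extends $f$'' of the statement --- after which the naturality square reduces to the identity $\eta_Y \circ f = \mathbf{U}(\mathbf{F}(f)) \circ \eta_X$, true by construction.

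The main obstacle here is not any deep difficulty but rather bookkeeping: the one genuinely load-bearing point is the finite-support hypothesis, which guarantees that both the basis expansion of $\phi$ and the defining sum for $\bar{g}$ are finite, so that $\mathbf{F}(X)$ is an honest vector space and $\bar{g}$ is well defined without any appeal to convergence or topology. I would emphasize this, since dropping finiteness would break both the existence of the basis and the universal property. An equivalent route would be to guess the counit $\varepsilon_V \maps \mathbf{F}\mathbf{U}(V) \to V$ and verify the two triangle identities directly, but the universal-property presentation above is cleaner and makes the ``free'' intuition explicit.
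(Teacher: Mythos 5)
Your proof is correct, and it is worth noting that the paper itself offers no proof of this proposition at all: it simply states the adjunction and follows it with an informal remark that $\mathbf{F}$ is ``reasonable'' because it adds only the sums and scalar multiples forced by the vector space axioms. Your argument supplies the standard, complete justification that the paper omits: you give $\R^X$ its vector space structure, identify the delta functions $\delta_x$ as a basis (correctly flagging finite support as the load-bearing hypothesis), define the unit $\eta_X(x) = \delta_x$, and verify the universal property --- existence of $\bar{g}$ by linear extension over the basis, uniqueness because linear maps are determined on a basis --- before assembling the natural bijection $\Vect(\mathbf{F}(X),V) \cong \Set(X,\mathbf{U}(V))$ and checking naturality in both variables. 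You also correctly observe that naturality in $X$ is what forces $\mathbf{F}(f)(\delta_x) = \delta_{f(x)}$, recovering the phrase ``the unique linear transformation which extends $f$'' in the statement; strictly speaking one should also note that functoriality of $\mathbf{F}$ (preservation of identities and composites) follows from the uniqueness clause of the universal property, but this is a one-line remark and does not constitute a gap.
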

Roughly, $\mathbf{F}$ is a reasonable functor to choose because we want it to preserve the information in each simplicial set as faithfully as possible. For a set $X$, $\mathbf{F}(X)$ is a vector space which
\begin{itemize}
    \item includes the elements of $X$ and,
    \item only includes other elements if they are necessary to make $\mathbf{F}(X)$ into a vector space. These include all sums and scalar multiples of elements in $X$ without any relations.
\end{itemize}
This is perfect because we're not doing anything too fancy. To summarize:
\begin{defn}
 There is a functor 
 \[ (\Delta^{op})^\mathbf{F} \maps \namedcat{sSet} \to \namedcat{sVect} \]
 which composes every simplicial set with the free vector space on a set functor. For a natural transformation of simplicial sets $\alpha \maps X \to Y$, this functor whiskers the natural transformation with the functor $\mathbf{F}$.
\end{defn}
\subsection{The Dold-Kan Correspondence}
To complete our quest of turning categories into chain complexes, we have to turn simplical sets into chain complexes. This is done by taking an alternating sum of the boundary maps.
\begin{defn}\label{alternating}
 Given a simplicial vector space $X\maps \Delta^{op} \to \namedcat{Vect}$, the \define{alternating face map chain complex} of $X$ is chain complex
 \[\begin{tikzcd} \ldots \ar[r,"\delta_{n+1}"] & X([n]) \ar[r,"\delta_n"] & X([n-1]) \ar[r,"\delta_{n-1}"] & \ldots \end{tikzcd}\]
 The boundary maps are defined by
 \[ \delta_n := \sum_{i=1}^{n} (-1)^i d_i\]
 where the $d_i \maps X([n]) \to X([n-1])$ are the face maps of $X$.
\end{defn}\noindent This gives a functor
\[A \maps \sVect \to \Ch(\Vect)\]
in a natural way. For a natural transformation $\alpha \maps X \to Y$ between simplicial vector spaces, there is a chain map
\[A(\alpha) \maps A(X) \to A(Y) \]
whose $n$-th component is given by $\alpha_{X[n]}$.
Usually people don't stop here. The alternating face map chain complex can be \define{normalized} by quotienting each vector space of $n$-chains by the subspace of degenerate simplices. The composition of normalization and taking alternating face map chain complex is called the Dold-Kan correspondence. It is famous because it forms an equivalence of categories between simplical vector spaces and chain complexes.

\subsection{Natural Transformations to Homotopies}
Now let's put all this together. Given a category
\[\begin{tikzcd}C = C_1 \ar[r,shift left=.5ex,"s"] \ar[r,shift right=.5ex,"t",swap] & C_0 \end{tikzcd} \]
we take it's nerve and free simplicial vector space. This results in a simplicial vector space $(\Delta^{op})^\mathbf{F} \circ N (C) \maps \Delta^{op} \to \namedcat{Vect}$ with 
\[(\Delta^{op})^\mathbf{F} \circ N(C)([0]) = \mathbf{F}(C_0) : = \mathbf{C_0},\] \[ (\Delta^{op})^\mathbf{F} \circ N (C) ([1]) = \mathbf{F}(C_1) : = \mathbf{C_1},
\]
\[(\Delta^{op})^\mathbf{F} \circ N (C) ([2])= \mathbf{F} ( N (C)[2]) : = \mathbf{N (C)[2]} \]
\[\vdots \]
where the application of $\mathbf{F}$ to a set or a function is indicated by making its symbol bold.
 $(\Delta^{op})^\mathbf{F} \circ N(C)$ can be turned into a chain complex
\[\begin{tikzcd} \ldots \ar[r] & \mathbf{C_1} \ar[r,"\mathbf{t}-\mathbf{s}"] & \mathbf{C_0} \ar[r,"0"] & 0 \ar[r] & \ldots\end{tikzcd} \]
 Although we didn't say it, everything here is 2-functorial, i.e. it defines a 2-functor
\[\mathrm{Ch} \maps \Cat \to \mathrm{Ch}_{\cdot} (\Vect). \]
This means that for a natural transformation
\[ \begin{tikzcd}
C_1 \ar[d,shift left=.5ex,"F_1"] \ar[d,shift right=.5ex,"G_1",swap] \ar[r,shift left=.5ex,"s"] \ar[r,shift right=.5ex,"t",swap] &  C_0 \ar[dl,"\alpha"] \ar[d,shift left=.5ex,"F_0"] \ar[d,shift right=.5ex,"G_0",swap] \\
D_1 \ar[r,shift left=.5ex,"s"] \ar[r,shift right=.5ex,"t",swap] &  D_0
\end{tikzcd}\]
we get a homotopy between chain maps as follows:
\[ \begin{tikzcd}
\ar[r] \ldots & \mathbf{N(C) [2]}\ar[r,"\delta_2^C"] \ar[d,"\mathbf{N(G)_{
[2]}}-\mathbf{N(F)_{[2]}}"description] & \mathbf{C_1} \ar[dl,"\alpha_2",swap] \ar[d,"\mathbf{G_1}-\mathbf{F_1}"description] \ar[r,"\mathbf{t}-\mathbf{s}"] & \mathbf{C_0} \ar[d,"\mathbf{G_0}-\mathbf{F_0}"description,swap] \ar[dl,"\alpha_1",swap] \ar[r,"0"] & 0 \ar[dl,"0"] \ar[d,"0"] \ar[r] & \ldots \\
\ar[r] \ldots & \mathbf{N(D)[2]}\ar[r,"\delta_2^D",swap] & \mathbf{D_1} \ar[r,"\mathbf{t'}-\mathbf{s'}",swap]& \mathbf{D_0} \ar[r,"0",swap] & 0 \ar[r] & \ldots 
\end{tikzcd}\]
\noindent $\alpha_1$ is defined to be the natural transformation $\mathbf{F}(\alpha) := \boldsymbol{\alpha}$, the linear extension of the natural transformation $\alpha$. Recall that because $\alpha$ is a natural transformation, it satisfies the equations
\[ s \circ \alpha = F_0 \text{ and } t \circ \alpha = G_0\]
Therefore, 
\begin{align*}
(\mathbf{t'}-\mathbf{s'}) \circ \boldsymbol{\alpha} + 0 \circ 0 &= (\mathbf{t'}-\mathbf{s'}
) \circ \boldsymbol{\alpha} \\
&= \mathbf{t'} \circ \boldsymbol{\alpha} -
 \mathbf{s'} \circ \boldsymbol{\alpha} \\
 &= \mathbf{G_0} - \mathbf{F_0}
\end{align*}
so the two squares on the right do indeed satisfy the equations for a homotopy of chain maps. This fact reflects that natural transformations have the right source and target.

The homotopy condition for the two squares on the left expresses the fact that natural transformations are compatible with morphisms in $C$. For a morphism $f \maps x \to y$ in $C$, the naturality square is a $3$-simplex in $N(D)$ 
\[
\begin{tikzcd}
F(x)\ar[d,"\alpha_x",swap] \ar[r,"F(f)"] & F(y)\ar[d,"\alpha_y"] \\
G(x) \ar[r,"G(f)",swap] & G(y)
\end{tikzcd}
\]
$\alpha_2$ must send $f$ to a sum of triangular $2$-simplices in $D$. Luckily,the above square has two nice triangles as boundaries given by collapsing edges with composition. These are
\[
A=
\begin{tikzcd}
F(x)\ar[dr,"G(f) \circ \alpha_x",swap] \ar[r,"F(f)"] & F(y)\ar[d,"\alpha_y"] \\
 & G(y)
\end{tikzcd}
\]
and
\[
B=\begin{tikzcd}
F(x)\ar[d,"\alpha_x",swap] \ar[dr,"\alpha_y \circ F(f)"]  & \\
G(x) \ar[r,"G(f)",swap] & G(y)
\end{tikzcd}
\]
To avoid choosing one triangle we take the second one and subtract the first. Indeed the map
\[\alpha_2 \maps \mathbf{C_1} \to \mathbf{N(D)[2]} \]
sends a morphism $f \maps x \to y$ in $C$ to the difference $B-A$ and is freely extended to the rest of $\mathbf{C_1}$. This map satisfies the chain homotopy condition.

\begin{prop}
For a morphism $f \maps x \to y$ in $C$, there is an equation
\[\delta^D_2 \circ \alpha_2 (f) + \alpha_1  \circ \delta^C_1(f) = G(f)- F(f)\] so that $\alpha_1$ and $\alpha_2$ form components of a chain homotopy.
\end{prop}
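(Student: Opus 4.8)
The plan is to verify the identity by a direct computation on a single generating morphism. Since $\mathbf{C_1} = \mathbf{F}(C_1)$ is the free vector space on the set of morphisms of $C$, and both $\delta^D_2 \circ \alpha_2$ and $\alpha_1 \circ \delta^C_1$ are linear maps, it suffices to check the equation on a basis element, i.e.\ on an arbitrary morphism $f \maps x \to y$. I would set up the calculation by first recording how the two alternating differentials act, using the alternating face-map differential $\sum_i (-1)^i d_i$, which in degree one is the map $\mathbf{t}-\mathbf{s}$ appearing in the chain complex and in degree two is $d_0 - d_1 + d_2$.

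The term $\alpha_1 \circ \delta^C_1(f)$ is the easy half. The $1$-simplex $f$ has faces its target and its source, so $\delta^C_1(f) = y - x$ in $\mathbf{C_0}$, and applying $\alpha_1 = \boldsymbol{\alpha}$ (the linear extension of $\alpha$, which sends an object $z$ to the component $\alpha_z$) yields
\[ \alpha_1 \circ \delta^C_1(f) = \alpha_y - \alpha_x. \]

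For the other half I would read the two triangles as composable pairs in $D$: the triangle $A$ is $F(x) \to F(y) \to G(y)$ via $F(f)$ then $\alpha_y$, and $B$ is $F(x) \to G(x) \to G(y)$ via $\alpha_x$ then $G(f)$. Their three faces are obtained by deleting the first vertex, composing the two edges, and deleting the last vertex, so $\delta^D_2(A) = \alpha_y - (\alpha_y \sdiamond F(f)) + F(f)$ and $\delta^D_2(B) = G(f) - (G(f) \sdiamond \alpha_x) + \alpha_x$. The single genuine input is naturality of $\alpha$: the two middle faces satisfy $\alpha_y \sdiamond F(f) = G(f) \sdiamond \alpha_x$, so in $\delta^D_2(\alpha_2(f)) = \delta^D_2(B) - \delta^D_2(A)$ these $d_1$-terms cancel, leaving $G(f) - F(f) + \alpha_x - \alpha_y$. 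Adding the two contributions, the $\alpha_x - \alpha_y$ cancels against $\alpha_y - \alpha_x$ and I am left with exactly $G(f) - F(f)$.

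The computation is almost entirely bookkeeping, and the main obstacle is simply keeping the vertex orderings and the signs straight: the two triangles share the hypotenuse $G(f)\sdiamond \alpha_x = \alpha_y \sdiamond F(f)$ but route through different intermediate objects ($F(y)$ versus $G(x)$), so one must be careful about which edge is deleted by each face map. The conceptual point worth emphasizing, however, is that the cancellation of the two $d_1$-faces \emph{is} the naturality square of $\alpha$; thus the chain-homotopy equation is, after transport through the nerve and the free-vector-space functor, nothing more than a repackaging of naturality, which is precisely the analogy between homotopies and natural transformations that this section is built to explain.
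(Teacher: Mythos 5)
Your proof is correct and takes essentially the same route as the paper's: both of you expand $\delta_2^D(B-A)$ via the alternating face maps, compute $\alpha_1 \circ \delta_1^C(f) = \alpha_y - \alpha_x$, and invoke the naturality square $\alpha_y \circ F(f) = G(f) \circ \alpha_x$ to cancel the hypotenuse terms, leaving exactly $G(f) - F(f)$. The only cosmetic difference is that the paper labels each triangle's hypotenuse with the ``crossed'' composite and performs the cancellation at the very end, whereas you cancel the two $d_1$-faces directly (and you make explicit the reduction to basis elements by linearity, which the paper leaves implicit); the underlying algebra is identical.
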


\begin{proof} 
According to definitions \ref{nerve} and \ref{alternating} we have that 
\[\delta^C_1 =  \mathbf{t} - \mathbf{s}\]
and that $\delta_2^D$ is the linear extension of the mapping
\begin{center}
\[ \begin{tikzcd} & x_1 \ar[ddr,"g"] & & &\\
 & & &\mapsto &f - g\circ f  + g \\
x_0 \ar[uur,"f"] \ar[rr,"g \circ f",swap] &  &x_2& & \end{tikzcd} \]
\end{center}
This gives that
\begin{align*} 
    \delta_2^D \circ \alpha_2(f) + \alpha_1 \circ \delta_1^C (f) & = \delta_2^D \circ \alpha_2 (f)+ \alpha_1( \mathbf{t} - \mathbf{s}) (f) \\
    &=\delta_2^D \circ \alpha_2 (f)+ \alpha_1 
    (y)- \alpha_1 (x)
    &= \delta_2^D( B- A) + \alpha_y - \alpha_x.
\end{align*}
Using the definition of $\delta_2^D$,
\begin{align*} &= ( \alpha_x - \alpha_y \circ F(f) + G(f) ) -( F(f) - G(f) \circ \alpha_x + \alpha_y)+ \alpha_y - \alpha_x \\
&= G(f) - F(f) +  G(f) \circ \alpha_x - \alpha_y \circ F(f) 
\end{align*}
However, because the naturality square for $f$ commutes we have that 
\[G(f) \circ \alpha_x = \alpha_y \circ F(f) \]
so that\[ G(f) \circ \alpha_x - \alpha_y \circ F(f) = 0.\]
Applying this to the above equation gives that 
\[ G(f) - F(f) +  G(f) \circ \alpha_x - \alpha_y \circ F(f) = G(f) - F(f). \]

\end{proof}





\section{Going Backwards}
 So far we have sketched a way of interpreting categories as chain complexes using the $2$-functor
\[\Ch = A \circ (\Delta^{op})^\mathbf{F} \circ N \maps \Cat \to \Ch(\Vect) \]Mathematicians are very suggestible. A way of interpreting $A$'s as $B$'s suggests that $B$'s can be thought of as a generalization of the $A$'s. With this logic, chain complexes generalize categories. In what way do they do this? Theorem \ref{embed} shows that nerves of categories contain only interesting data in their $0$, $1$ and $2$-simplices. Because the $2$-functors $A$ and $(\Delta^{op})^\mathbf{F}$ don't produce interesting data where there wasn't any before, the chain complexes which come from the nerves of categories have the same property. Therefore chain complexes generalize categories by containing higher dimensional content.

There is another notion of category with "higher dimensional content" called n-categories for which a good exposition can be found in \cite{baezn}. $n$-categories are notoriously complex. Todd Trimble drafted a 51-page definition of a weak $4$-category \cite{trimblet}

\begin{quote}
  In 1995, at Ross Street's request, I gave a very explicit description of weak 4-categories, or tetracategories as I called them then, in terms of nuts-and-bolts pasting diagrams, taking advantage of methods I was trying to develop then into a working definition of weak n-category. Over the years various people have expressed interest in seeing what these diagrams look like -- for a while they achieved a certain notoriety among the few people who have actually laid eyes on them (Ross Street and John Power may still have copies of my diagrams, and on occasion have pulled them out for visitors to look at, mostly for entertainment I think).
\end{quote}
This quote is referring to weak $n$-categories. Strict $n$-categories have much simpler axioms as every composition operation is associative strictly. Regardless, classifying and understanding $n$-categories is a large and arduous mathematical quest which is relevant to many subjects in math. For example $n$-categories can be used to do rewriting theory in a more sophisticated way \cite{coherence}. 

It is a theorem of the heart that Proposition \ref{vectgraph} extends as follows. Maybe it has been proved somewhere but I do not know where.
\begin{hyp}
There is a suitable equivalence
\[\mathsf{nGraph}(\Vect) \cong \mathsf{nCat}(\Vect) \]
between $n$-dimensional graphs internal to $\Vect$ and $n$-categories internal to $\Vect$.
\end{hyp}
An $n$-dimensional graph should be something like a graph with edges between edges, and edges between those edges continuing until you get $n$-levels deep. What this equivalence would say is that every $n$-dimensional internal to $\Vect$ already has an intricate network of interacting composition operations built into it in a unique way. Homology is so powerful because it allows you to reason about these complicated networks of composition just by thinking about vector spaces and linear maps. 

\section{Acknowledgments}
Thank you to my advisor John Baez for helping me when I got stuck. Thank you to the users on reddit who pointed out many errors. Thank you to Olivia Borghi, Aaron Goodwin, Connor Malin, David Jaz Meyers, and David Weisbart for helping me revise this article. Thank you to Christian Williams for helping me figure out how natural transformations turn into chain homotopies. Thank you to everyone around me who supported me while writing this, both emotionally and physically. Thank you to anyone I forgot, math is created by communities and not individuals. This work was produced on Tongva land.

\bibliographystyle{acm}
\bibliography{references}

\begin{thebibliography}{1}

\bibitem{commons}
\href{https://commons.wikimedia.org/wiki/File:Eckman_hilton_clock.svg}{Eckmann
  Hilton Clock}.

\bibitem{nLab}
Nerve.
\newblock Accessed: 2019-10-15.
\newblock
  \href{https://ncatlab.org/nlab/show/nerve}{https://ncatlab.org/nlab/show/nerve}.

\bibitem{twf}
{\sc Baez, J.}
\newblock This weeks finds in mathematical physics (week 117), 1998.
\newblock Available at
  \href{http://math.ucr.edu/home/baez/week117.html}{week117}.

\bibitem{baezn}
{\sc Baez, J.~C.}
\newblock An introduction to n-categories, 1997.
\newblock Available at
  \href{https://arxiv.org/abs/q-alg/9705009}{arXiv:9705009}.

\bibitem{baezcrans}
{\sc Baez, J.~C., and Crans, A.~S.}
\newblock Higher-dimensional algebra vi: Lie 2-algebras.
\newblock {\em Theory Appl. Categ 12}, 15 (2004), 492--528.
\newblock Available at
  \href{http://emis.ams.org/journals/TAC/volumes/12/15/12-15.pdf}{TAC}.

\bibitem{crans}
{\sc Crans, A.~S.}
\newblock {\em Lie 2-Algebras}.
\newblock PhD thesis, University of California Riverside, 2004.
\newblock Available at
  \href{https://arxiv.org/abs/math/0409602}{arXiv:0409602}.

\bibitem{coherence}
{\sc Forest, S., and Mimram, S.}
\newblock Coherence of gray categories via rewriting.
\newblock In {\em 3rd International Conference on Formal Structures for
  Computation and Deduction (FSCD 2018)\/} (2018).
\newblock Available at
  \href{https://drops.dagstuhl.de/opus/volltexte/2018/9185/pdf/LIPIcs-FSCD-2018-15.pdf}{drops.dagstuhl.de}.

\bibitem{trimblet}
{\sc Trimble, T.}
\newblock Notes on tetracategories, 2006.
\newblock Available at
  \href{http://math.ucr.edu/home/baez/trimble/tetracategories.html}{tetracategories.html}.

\end{thebibliography}

\end{document}